\NeedsTeXFormat{LaTeX2e}
\documentclass[11pt, a4paper, reqno, ]{amsart}
\usepackage{amscd}
\usepackage{%
amssymb,latexsym, xspace, enumerate, verbatim, }
\parindent=0pt
\def\bb#1{\ifmmode {\mathbb #1}\else {$\mathbb #1$}\fi}
\font\bit=cmmib10
\newcommand{\mb}[1]{\ifmmode{\hbox{\bit #1\/}}\else{{\bit #1\/}}\fi}

\theoremstyle{plain}
\newtheorem{theo}{Theorem}
\newtheorem{cor}[theo]{Corollary}

\newtheorem{lem}[theo]{Lemma}
\newtheorem{prop}[theo]{Proposition}
\newtheorem{Def}[theo]{Definition}
\newtheorem{rem}[theo]{Remark}

\numberwithin{equation}{section}


\newcommand{\jac}[1]{\ensuremath{{\rm J}#1}}
\newcommand{\mat}[2]{\ensuremath{{\rm M}_{#1}(#2)}}

\newcommand{\Z}{\ensuremath{{\mathbb Z}}\xspace}


\begin{document}
\title{$r$-clean rings}
\author{NAHID ASHRAFI and Ebrahim nasibi}
\address{
Department of Mathematics\\ Semnan University\\ Semnan, Iran\\}%
 \email{<nashrafi@semnan.ac.ir>, <ashrafi49@Yahoo.com>}%
\address{
Department of Mathematics\\Semnan University\\ Semnan, Iran\\}%
\email{<ebrahimnasibi@yahoo.com>, <enasibi@gmail.com>}%

\subjclass[2000]{16E50, 16U99} \keywords{Bergman's example, clean
rings, $r$-clean rings, von Neumann regular rings.}

\begin{abstract}
An element of a ring $R$ is called clean if it is the sum of an
idempotent and a unit. A ring $R$ is called clean if each of its
element is clean. An element $r\in R$ called regular if $r=ryr$
for some $y\in R$. The ring $R$ is regular if each of its element
is regular. In this paper we define a ring is $r$-clean if each of
its elements is the sum of a regular and an idempotent element. We
give some relations between $r$-clean and clean rings. Finally we
investigate some properties of $r$-clean rings.
\end{abstract}
\maketitle

\section{Introduction}
Throughout this paper, $R$ denotes an associative ring with
identity, $U(R)$ the group of units, $Id(R)$ the set of
idempotents, $\jac (R)$ the Jacobson radical and $\mat {n}{R}$
the ring of all $n\times n$ matrices over $R$.\\
An element $x$ of a ring is called clean if $x=u+e$, where $u\in
U(R)$ and $e\in Id(R)$. A ring $R$ is called clean if each of its
element is clean. Clean rings first were introduced by Nicholson
\cite{Nic}. Several peoples worked on this subject and
investigate properties of clean rings, for example see \cite{And}, \cite{Cam}, \cite{Han1} and \cite{Nic2}.\\
A ring $R$ is said to be exchange ring if for each $a\in R$ there
exists $e\in Id(R)$ such that $e\in aR$ and $(1-e)\in (1-a)R$.
Nicholson \cite[Proposition 1.8]{Nic} proved that clean rings are
exchange and a ring with central idempotents is clean if and only
if it is exchange. Camillo and Yu \cite[Theorem 9]{Cam} proved
that a ring is semiperfect if and only if it is clean
and has no infinite orthogonal family of idempotents.\\
In 1936, von Neumann defined that an element $r\in R$ is regular
if $r=ryr$ for some $y\in R$, the ring $R$ is regular if each of
its element is regular. Some properties of regular rings has been
studied in \cite{Goo}. A ring $R$ is called unit regular if, for
each $a\in R$, there exists a unit $u\in R$ such that $aua=a$.
Camillo and Yu \cite[Theorem 5]{Cam} proved that every unit
regular ring is clean. In \cite{Nic1} Nicholson and Varadarajan
 proved that the converse is not true.\\
Let $Reg(R)=\{a\in R:$ a is regular$\}$. We call an element $x$ of
a ring $R$ is $r$-clean if $x=r+e$, where $r\in Reg(R)$ and $e\in
Id(R)$. A ring $R$ is $r$-clean if each of its element is
$r$-clean.\\
Clearly regular rings and clean rings are $r$-clean. But we will
give some examples that shows in general, $r$-clean rings may not
be regular. Also we will give an example that shows in general,
$r$-clean rings may not be clean. In fact Bergman \cite[Example
1]{Han} has constructed a regular ring which is not directly
finite ( a ring $R$ is directly finite
 if for any elements $a,b\in R$, $ab=1$ implies $ba=1$) with $2$ invertible in which not every element is a
sum of units. We will
 prove that Bergman's example is not clean, but clearly it is $r$-clean.\\
We will show that a directly finite ring $R\neq 0$ is local if and
only if it is $r$-clean and $0$ and $1$ are the only idempotents
in $R$. Finally we give some properties of $r$-clean rings and we
will prove that if $R$ is an $r$-clean ring, then so is the matrix
ring $\mat {n} {R}$ for any $n\geq 1$.

\section{$r$-clean rings}
In this section first we define $r$-clean element and $r$-clean
rings and we show that the class of clean rings are a proper
subset the class of $r$-clean rings.
\begin{Def}
An element x of a ring $R$ is $r$-clean if $x=r+e$, where $r\in
Reg(R)$ and $e\in Id(R)$, a ring $R$  is $r$-clean if each of its
element is $r$-clean.
\end{Def}
Clearly regular rings and clean rings are $r$-clean. But in
general, $r$-clean rings may not be regular. For example, every
semiperfect ring is clean (see \cite[Theorem 9]{Cam}), so it is
$r$-clean. But clearly it is not regular. Further $\Z_{4}$ is not
regular, because $\overline{2}$ is not regular in $\Z_{4}$, but it
is easy to check
that $\Z_{4}$ is $r$-clean.\\
Also in general, $r$-clean rings may not be clean. For example
\cite[Example 1]{Han}, proceeding as Bergman's example, let $F$ be
a field with $char(F)\neq 2$, $A=F[[x]]$ and $K$ be the field of
fractions of $A$. All the ideals of $A$ are generated by power of
$x$, denote by $(x^{n})$. Define:
$$R=\{r\in End (A_{F}): there~exists~q\in K~and~a~positive~integer~n,$$$$with~r(a)=qa~for~all~a\in  (x^{n}) \}.$$
By \cite[Example 1]{Han}, $R$ is a regular ring which is not
directly finite and $R$ is not generated by its units. So every
element of $R$ is not a sum of units, and since $char(F)\neq 2$,
$2$ is invertible in $R$. Also since $R$ is regular thus $R$ is
$r$-clean. But $R$ is not clean, because in otherwise, Proposition
$10$ in \cite{Cam} ( Let $R$ be a ring in which $2$ is invertible.
Then $R$ is clean if and only if every element of $R$ is the sum
of a unit and a square root of $1$) implies that every element in
$R$ is a sum of a unit and a square root of $1$. Thus every
element in $R$ is a sum of two units which it is a contradiction.\\
In following, we investigate some conditions in which $r$-clean rings are clean.\\

\begin{lem}\label{local}
A ring $R\neq 0$ is local if and only if it is clean and $0$ and
$1$ are the only idempotents in $R$.
\end{lem}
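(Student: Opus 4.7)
The plan is to reduce the equivalence to the standard characterization of local rings: $R$ is local if and only if for every $x \in R$, either $x \in \un{R}$ or $1-x \in \un{R}$. With this fact in hand, both directions of the lemma fall out quickly from the clean decomposition.

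For the forward implication, I would assume $R$ is local and first verify that the only idempotents are $0$ and $1$. Given $e^2 = e$, we have $e(1-e) = 0$; since $R$ is local, one of $e, 1-e$ must be a unit, forcing the other to vanish. To show $R$ is clean, I would split on whether $x \in R$ is a unit. If $x \in \un{R}$, write $x = 0 + x$ with $0 \in \I{R}$. Otherwise $x$ lies in the unique maximal ideal of $R$, so $1-x \in \un{R}$, whence $x - 1 \in \un{R}$ as well, and $x = 1 + (x-1)$ provides the desired clean decomposition.

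For the reverse implication, I would take an arbitrary $x \in R$ and apply the clean hypothesis to write $x = e + u$ with $e \in \I{R}$, $u \in \un{R}$. By assumption $e \in \{0, 1\}$. If $e = 0$, then $x = u$ is a unit; if $e = 1$, then $x - 1 = u$ is a unit and therefore so is $1 - x = -u$. In either case one of $x, 1-x$ belongs to $\un{R}$, and the characterization quoted above yields that $R$ is local.

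No step is genuinely hard, but the subtle point worth double-checking is the forward direction, where one must invoke the classical fact that in a local ring the set of non-units coincides with the unique maximal ideal, so that a non-unit $x$ automatically produces $1-x \in \un{R}$. Everything else is a one-line case split on the clean decomposition, and together the two directions deliver the claimed equivalence.
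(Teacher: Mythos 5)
Your argument is correct and complete; note that the paper itself gives no proof here, simply citing Lemma~14 of Nicholson--Zhou, so your self-contained derivation is a genuine addition rather than a retracing of the paper's steps. Both directions are sound: the idempotent computation from $e(1-e)=0$, the case split $x=0+x$ versus $x=1+(x-1)$ for cleanness, and the reverse reading of $x=e+u$ with $e\in\{0,1\}$ all check out, and the hypothesis $R\neq 0$ is used implicitly to keep $0\neq 1$. The one external input you rely on --- that $R$ is local if and only if for every $x$ either $x\in\un{R}$ or $1-x\in\un{R}$, equivalently that the non-units of a local ring form the unique maximal (two-sided) ideal --- is a standard fact that holds for noncommutative rings as well (it is one of the equivalent conditions in Lam's characterization of local rings), which matters because the paper works with general associative rings with identity. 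Since that characterization is itself usually proved from scratch in a few lines, your route is essentially as elementary as possible; the only way to make it fully self-contained would be to include the short argument that non-units are closed under addition in a local ring, which is what justifies the step ``$x$ a non-unit implies $1-x$ a unit'' in your forward direction.
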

\begin{proof}
See \cite[Lemma 14]{Nic2}.
\end{proof}
\begin{theo}\label{clean}
If $R\neq 0$ is a directly finite $r$-clean ring and $0$ and $1$
are the only idempotents in $R$, then $R$ is clean.
\end{theo}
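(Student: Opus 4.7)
The plan is to use Lemma~\ref{local} and reduce the problem to showing that $R$ is local. Since $0$ and $1$ are already the only idempotents, by that lemma it is enough to verify the other hypothesis (cleanness), or equivalently to check the standard characterization: for every $x\in R$, at least one of $x$ and $1-x$ is a unit.

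The first key step is to analyze a regular element of $R$ under the hypotheses. Let $r\in\text{Reg}(R)$, say $r=rsr$. Then $rs$ and $sr$ are idempotents of $R$, so each of them lies in $\{0,1\}$. If $rs=1$, direct finiteness forces $sr=1$, hence $r$ is a unit; similarly if $sr=1$. Otherwise $rs=0$ or $sr=0$, and in either case $r=rsr=0$. Thus a regular element of $R$ is either a unit or zero.

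The second step combines this with the $r$-clean decomposition. Given $x\in R$, write $x=r+e$ with $r\in\text{Reg}(R)$ and $e\in\text{Id}(R)=\{0,1\}$. If $e=0$, then $x=r$ is either a unit or $0$; in the latter case $1-x=1$ is a unit. If $e=1$, then $x-1=r$ is either a unit or $0$; if it is a unit then $1-x=-(x-1)$ is a unit, and if $x-1=0$ then $x=1$ is a unit. In every case, at least one of $x$ and $1-x$ is a unit, so $R$ is local.

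The last step is a one-line invocation: a local ring is clean (this is the easy direction of Lemma~\ref{local}), so $R$ is clean, as required. I do not expect a serious obstacle here; the only subtle point is ensuring that direct finiteness is actually used, and it enters exactly where we pass from the one-sided inverse $rs=1$ to the two-sided inverse $sr=1$, which is essential because without it a regular element with $rs=1$ and $sr\notin\{0,1\}$ would not be forced to be a unit.
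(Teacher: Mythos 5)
Your proof is correct. The key step is the same as the paper's: from $r=rsr$ you observe that $rs$ (the paper uses $ry$) is an idempotent, hence $0$ or $1$ by hypothesis; $rs=0$ forces $r=0$, while $rs=1$ together with direct finiteness forces $r\in U(R)$. Where you diverge is in how you finish. The paper concludes directly: if $r\neq 0$ then $x=r+e$ is already a clean decomposition, and if $r=0$ it rewrites $x=e=(2e-1)+(1-e)$ to exhibit cleanness explicitly. You instead deduce that for every $x$ at least one of $x$, $1-x$ is a unit, invoke the standard characterization of (possibly noncommutative) local rings, and then apply the forward direction of Lemma~\ref{local} to get cleanness. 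Both routes are valid; yours has the small cost of leaning on the ``$x$ or $1-x$ invertible $\Leftrightarrow$ local'' equivalence, which is genuinely standard (e.g.\ Lam's characterizations of local rings) but is itself a short theorem in the noncommutative setting, whereas the paper's route is entirely self-contained. The compensating benefit is that your argument establishes locality outright, so it simultaneously proves the Corollary that immediately follows the theorem in the paper (a directly finite $R\neq 0$ is local iff it is $r$-clean with only trivial idempotents), rather than obtaining that corollary by combining the theorem with Lemma~\ref{local} afterwards.
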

\begin{proof}
Since $R$ is $r$-clean, each $x\in R$ has the form $x=r+e$, where
$r\in Reg(R)$ and $e\in Id(R)$. If $r=0$, then $x=e=(2e-1)+(1-e)$
and since $2e-1\in U(R)$ and $1-e\in Id(R)$, so $x$ is clean.
Hence $R$ is clean. But if $r\neq0$, then there exists $y\in R$
such that $ryr=r$. Thus $ry\in Id(R)$. So by hypothesis, $ry=0$ or
$ry=1$. Now if $ry=0$, then $r=ryr=0$ which is a contradiction.
Therefore $ry=1$ and since $R$ is directly finite so $yr=ry=1$.
Thus $r\in U(R)$. So $x$ is clean and hence $R$ is clean.
\end{proof}
\begin{cor}
A directly finite ring $R\neq 0$ is local if and only if it is
$r$-clean and $0$ and $1$ are the only idempotents in $R$.
\end{cor}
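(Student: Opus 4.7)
The plan is to derive this corollary as an immediate consequence of the preceding Lemma \ref{local} and Theorem \ref{clean}; no fresh argument is required. The equivalence splits into two implications, and in each direction we simply feed one cited result into the other.

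For the forward direction, I would assume $R$ is local. Then Lemma \ref{local} gives at once that $R$ is clean and that $0,1$ are the only idempotents of $R$. Since every unit $u$ satisfies $u = u u^{-1} u$, units are regular, so any clean decomposition $x = u + e$ already displays $x$ as $r$-clean. Thus $R$ is $r$-clean with only trivial idempotents, and direct finiteness need not even be invoked.

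For the reverse direction, I would assume that $R \neq 0$ is directly finite, $r$-clean, and has $0,1$ as its only idempotents. These are exactly the hypotheses of Theorem \ref{clean}, which therefore delivers that $R$ is clean. Combined with the hypothesis on idempotents, this is the right-hand side of Lemma \ref{local}, which immediately yields that $R$ is local.

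The main (and only) conceptual point is the observation that every clean ring is $r$-clean, which is already remarked on in the paragraph following Definition~1. Beyond that the proof is pure bookkeeping: there is no obstacle to anticipate, since the two cited results have been pre-arranged to fit together exactly.
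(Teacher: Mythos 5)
Your proof is correct and is exactly the intended derivation: the paper states this corollary without proof precisely because it follows by combining Lemma \ref{local} with Theorem \ref{clean} in the way you describe. Your added remark that clean implies $r$-clean (via $u = uu^{-1}u$) correctly fills in the one small step the forward direction needs.
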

\begin{theo}
Let $R$ be a commutative $r$-clean ring and each pair of
idempotent in $R$ be orthogonal. Then $R$ is clean.
\end{theo}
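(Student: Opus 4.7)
The plan is to reduce the statement to Theorem \ref{clean}. That earlier theorem already produces cleanness for a directly finite $r$-clean ring whose only idempotents are $0$ and $1$, so I only need to see that the two hypotheses there are forced by the hypotheses here.

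First I would dispose of direct-finiteness for free: since $R$ is commutative, $ab = 1$ immediately gives $ba = ab = 1$, so any commutative ring is directly finite. Next I would use the pairwise orthogonality hypothesis to collapse $\mathrm{Id}(R)$ to $\{0,1\}$: for any idempotent $e \in R$ with $e \neq 1$, the set $\{e,1\}$ is a pair of distinct idempotents of $R$, so by hypothesis $e\cdot 1 = 0$, i.e.\ $e = 0$. Hence the only idempotents of $R$ are $0$ and $1$. With both conditions in hand, Theorem \ref{clean} applies and concludes that $R$ is clean.

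There is essentially no substantive obstacle; the main point is simply reading the orthogonality assumption as ``any two distinct idempotents have product zero'', after which the result falls out as a direct corollary of Theorem \ref{clean}. (One could equivalently regard the statement as the commutative analogue of the corollary following Theorem \ref{clean}, with ``directly finite'' traded for the stronger ``commutative''.)
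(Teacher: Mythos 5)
Your argument is correct under a literal reading of the hypothesis, but it is a genuinely different route from the paper's. You apply the orthogonality assumption to the pair $\{e,1\}$ for an idempotent $e\neq 1$ to get $e=e\cdot 1=0$, hence $Id(R)=\{0,1\}$, note that commutativity trivially gives direct finiteness, and then quote Theorem \ref{clean}; this is sound, and it makes the statement an immediate special case of the corollary following Theorem \ref{clean}, with no new content. The paper instead keeps nontrivial idempotents in play: it invokes \cite[Theorem 10]{And} (regular elements of a commutative ring are clean) to write $x=r+e_2$ with $r$ regular and then $r=e_1+u$ with $e_1\in Id(R)$ and $u\in U(R)$, and uses orthogonality only of the two idempotents $e_1,e_2$ that actually occur, so that $e_1+e_2$ is again an idempotent and $x=(e_1+e_2)+u$ is clean. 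The difference matters because your reduction hinges on reading ``each pair of idempotents is orthogonal'' as including pairs containing $1$; if the intended hypothesis is the weaker ``any two distinct nontrivial idempotents are orthogonal'' (which is what the paper's proof actually uses, and which is satisfied, e.g., by $F\times F$, a ring with idempotents other than $0$ and $1$), then $Id(R)$ does not collapse to $\{0,1\}$ and your appeal to Theorem \ref{clean} is unavailable, whereas the paper's argument still goes through (modulo its own unaddressed case $e_1=e_2$). So: accept your proof for the theorem as literally stated, but be aware that it proves a degenerate version of what the authors appear to intend, and that the paper's route via the Anderson--Camillo result is the one that survives the weaker reading.
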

\begin{proof}
By \cite[Theorem 10]{And}, for commutative rings every regular
ring is clean. So for each $x\in R$, we can write
$x=e_{1}+e_{2}+u$, where $e_{1},e_{2}\in Id(R)$ and $u\in U(R)$.
Now since $e_{1}$ and $e_{2}$ are orthogonal, $e=e_{1}+e_{2}\in
Id(R)$. Hence $x=e+u$ is clean, which shows that $R$ is clean.
\end{proof}
Now, we give some properties of $r$-clean rings.\\

\begin{theo} \label{1-x}
Let $R$ be a ring, then $x\in R$ is $r$-clean if and only if $1-x$
is $r$-clean.
\end{theo}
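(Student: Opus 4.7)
The plan is to prove both implications simultaneously by observing that the set of $r$-clean elements is closed under the map $x \mapsto 1-x$. Suppose $x = r + e$ with $r \in \text{Reg}(R)$ and $e \in \text{Id}(R)$. Then I rewrite
\[
1 - x = (-r) + (1 - e),
\]
and it suffices to check that $-r$ is regular and that $1-e$ is idempotent. The second is standard: $(1-e)^2 = 1 - 2e + e^2 = 1 - e$. For the first, pick $y \in R$ with $r = ryr$; then a direct computation gives $(-r)(-y)(-r) = -ryr = -r$, so $-r$ is regular (with witness $-y$).

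This proves that if $x$ is $r$-clean then $1-x$ is $r$-clean. The converse follows by applying the same argument to $1 - x$ in place of $x$, since $1 - (1-x) = x$.

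The main (only) subtlety is verifying that $\text{Reg}(R)$ is closed under negation, which is immediate from the definition as noted above. There is no obstacle beyond this short calculation.
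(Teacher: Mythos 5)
Your proof is correct and matches the paper's argument: both use the decomposition $1-x=(-r)+(1-e)$ together with the witness $-y$ for the regularity of $-r$, and both obtain the converse by the same computation (you via the symmetry $1-(1-x)=x$, the paper by writing it out explicitly). No issues.
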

\begin{proof}
Let $x\in R$ be $r$-clean. Then write $x=r+e$, where $r\in Reg(R)$
and $e\in Id(R)$. Thus $1-x=-r+(1-e)$. But there exists $y\in R$
such that $ryr=r$. Hence $(-r)(-y)(-r) = - (ryr) = -r$ and since
$-r\in Reg(R)$ and $1-e\in Id(R)$, so $1-x$ is $r$-clean.\\
Conversely, if $1-x$ is $r$-clean, write $1-x=r+e$, where $r\in
Reg(R)$ and $e\in Id(R)$. Thus $x=-r+(1-e)$, like previous part,
$-r\in Reg(R)$ and $1-e\in Id(R)$. Therefore $x$ is $r$-clean.
\end{proof}
\begin{cor}
Let $R$ be a ring and $x\in \jac (R)$. Then $x$ is $r$-clean.
\end{cor}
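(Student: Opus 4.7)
The plan is to reduce the statement to Theorem \ref{1-x} by exhibiting a trivial $r$-clean decomposition of $1-x$. The key algebraic fact about the Jacobson radical I would invoke is the standard one: for every $x\in\jac(R)$, the element $1-x$ lies in $U(R)$.

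From there, I would observe that every unit is regular: if $u\in U(R)$, then $u=u\,u^{-1}u$, so $U(R)\subseteq Reg(R)$. Combining this with $1-x\in U(R)$ gives $1-x\in Reg(R)$, and hence the decomposition
\[
1-x=(1-x)+0,
\]
with $1-x\in Reg(R)$ and $0\in Id(R)$, shows that $1-x$ is $r$-clean.

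Finally, I would appeal to Theorem \ref{1-x}, which states that $y$ is $r$-clean if and only if $1-y$ is $r$-clean. Applied with $y=1-x$ (so that $1-y=x$), it immediately yields that $x$ is $r$-clean.

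There is no real obstacle here; the only thing one needs to be sure about is that the inclusion $U(R)\subseteq Reg(R)$ is valid in the sense of regularity used in the paper, which is the von Neumann definition $r=ryr$ recalled in the introduction, so this is immediate. The corollary is essentially a one-line consequence once Theorem \ref{1-x} is in hand.
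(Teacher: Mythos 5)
Your proof is correct and follows exactly the paper's argument: $1-x\in U(R)$, units are regular, so $1-x$ is $r$-clean, and Theorem \ref{1-x} transfers this to $x$. You merely make explicit the two small steps ($u=uu^{-1}u$ and the decomposition $1-x=(1-x)+0$) that the paper leaves implicit.
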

\begin{proof}
Let $x\in \jac (R)$. Then $1-x\in U(R)$. So  $1-x\in Reg(R)$.
Hence $1-x$ is $r$-clean. Therefore by Theorem \ref{1-x}, $x$ is
$r$-clean.
\end{proof}
\begin{theo}\label{factor}
Every factor ring of an $r$-clean ring is $r$-clean. In particular
a homomorphic image of an $r$-clean ring is $r$-clean.
\end{theo}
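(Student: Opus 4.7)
The plan is to take an ideal $I$ of an $r$-clean ring $R$, lift an arbitrary element of $R/I$ to $R$, and then push its $r$-clean decomposition down through the canonical projection. The key observation is that both being idempotent and being (von Neumann) regular are properties expressible by equations in the ring, so both survive under any ring homomorphism.

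First I would fix an arbitrary $\bar x = x + I \in R/I$ and use the hypothesis that $R$ is $r$-clean to write $x = r + e$ with $r \in Reg(R)$ and $e \in Id(R)$. Projecting, $\bar x = \bar r + \bar e$. The equation $e^2 = e$ in $R$ immediately gives $\bar e^{\,2} = \bar e$, so $\bar e \in Id(R/I)$. For the regular part, there exists $y \in R$ with $ryr = r$; passing to $R/I$ yields $\bar r \, \bar y \, \bar r = \bar r$, so $\bar r \in Reg(R/I)$. This exhibits $\bar x$ as an $r$-clean element of $R/I$, and since $\bar x$ was arbitrary, $R/I$ is $r$-clean.

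For the second sentence, I would invoke the standard first isomorphism theorem: if $\varphi \colon R \to S$ is a surjective (or, after replacing $S$ by $\varphi(R)$, any) ring homomorphism, then $\varphi(R) \cong R/\ker(\varphi)$. Since $\ker(\varphi)$ is a two-sided ideal, the first part applies and $\varphi(R)$ is $r$-clean.

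There is no real obstacle here; the argument is essentially the same as the classical proof that factors of clean rings are clean, with units replaced by regular elements. The only tiny point to keep in mind is that the witness equation for regularity is preserved by the quotient map (which it is, since $\overline{ryr} = \bar r \bar y \bar r$), so no extra hypothesis on $I$ is needed.
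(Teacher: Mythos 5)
Your argument is correct and is essentially the same as the paper's: lift an element of $R/I$, decompose it in $R$ as a regular plus an idempotent, and observe that both the idempotent equation and the regularity witness $ryr=r$ pass through the canonical projection. The only addition is your explicit appeal to the first isomorphism theorem for the homomorphic-image clause, which the paper leaves implicit.
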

\begin{proof}
Let $R$ be $r$-clean and $I\lhd R$. Also let $\overline{x}=x+I \in
  \frac{R}{I}$. Since $R$ is $r$-clean so we have $x=r+e$, where $r\in Reg(R)$ and $e\in
  Id(R)$. Thus $\overline{x}=\overline{r}+\overline{e}$. But there exists $y\in R$ such that
  $ryr=r$. Therefore $\overline{r}\overline{y}\overline{r}=\overline{r}$. So
$\overline{r}\in Reg(R)$ and since $\overline{e}\in
Id(\frac{R}{I})$, it follows that $\frac{R}{I}$ is $r$-clean.
\end{proof}
\begin{rem}
In general, inverse of above theorem may not be correct. For
example, if $p$ be a prime number, then $\frac{\Z} {p\Z} \cong
\Z_p$ is $r$-clean, but $\Z$ is not $r$-clean.
\end{rem}
\begin{theo}\label{product}
A direct product $R=\prod_{i\in I} R_{i}$ of rings $\{R_{i}\}_{i
\in I}$ is $r$-clean if and only if so is each $\{R_{i}\}_{i\in
I}$.
\end{theo}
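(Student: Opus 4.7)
The plan is to prove both directions by exploiting the componentwise structure of the direct product, relying on the fact that both idempotents and regular elements behave well under direct products.

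For the forward direction, I would observe that each $R_i$ is a homomorphic image of $R=\prod_{i\in I} R_i$ under the canonical projection $\pi_i \colon R \to R_i$. Since $R$ is assumed $r$-clean, Theorem \ref{factor} immediately yields that each $R_i$ is $r$-clean. This direction is essentially a free corollary of the previous theorem.

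For the reverse direction, I would take an arbitrary element $x=(x_i)_{i\in I} \in R$. Assuming each $R_i$ is $r$-clean, for every $i$ we can write $x_i = r_i + e_i$ with $r_i \in Reg(R_i)$ and $e_i \in Id(R_i)$. Set $r=(r_i)_{i\in I}$ and $e=(e_i)_{i\in I}$. Checking that $e \in Id(R)$ is immediate since idempotency is a componentwise condition. For $r$, pick witnesses $y_i \in R_i$ with $r_i y_i r_i = r_i$ and assemble $y = (y_i)_{i \in I} \in R$; then componentwise $ryr = r$, so $r \in Reg(R)$. Hence $x = r+e$ exhibits $x$ as $r$-clean, and $R$ is $r$-clean.

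There is no real obstacle here: the proof rests entirely on the observation that both of the distinguished classes in the definition of $r$-clean (idempotents and regular elements) are closed under arbitrary direct products, because the defining equations $e^2=e$ and $ryr=r$ are purely equational and can be verified coordinate by coordinate using the axiom of choice to select the witnesses $y_i$. The only point worth stating carefully is the choice of $y$, since the definition of regular requires an existential quantifier, but this presents no difficulty.
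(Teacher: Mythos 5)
Your proof is correct and follows essentially the same path as the paper's: the forward direction is deduced from Theorem \ref{factor} via the canonical projections, and the converse is handled componentwise by assembling the regular parts, the idempotent parts, and the witnesses $y_i$ into elements of the product. No further comment is needed.
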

\begin{proof}
One direction immediately follows from Theorem \ref{factor}.\\
Conversely, let $R_{i}$ be $r$-clean for each $i\in I$. Set
$x=(x_{i})_{i \in I}~ \in \prod_{i\in I} R_{i}$. For each $i$,
write $x_{i}=r_{i}+e_{i}$, where $r_{i}\in Reg(R_{i})$ and
$e_{i}\in Id(R_{i})$. Since $r_{i} \in Reg(R_{i})$, there exists
$y_{i}\in R_{i}$ such that $r_{i}y_{i}r_{i}=r_{i}$. Thus
$x=(r_{i})_{i \in I}+(e_{i})_{i \in I}$, where $(r_{i})_{i \in
I}\in Reg(\prod_{i\in I} R_{i})$ and $(e_{i})_{i \in I}\in
Id(\prod _{i\in I}R_{i})$. Therefore $\prod _{i\in I}R_{i}$ is
$r$-clean.
\end{proof}
\begin{lem}\label{R[x]}
Let $R$ be a commutative ring and $f=\sum_{i=0} ^{n}a_{i}x^{i}\in
R[x]$ be regular. Then $a_{0}$ is regular and $a_{i}$ is nilpotent
for each $i$.
\end{lem}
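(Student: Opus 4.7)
The plan is to extract information from the defining equation $fgf=f$, where $g=\sum_{j=0}^{m}b_{j}x^{j}\in R[x]$ is a quasi-inverse of $f$, by separating the constant coefficient from the rest.

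For the constant coefficient, I would simply compare constant terms on both sides of $fgf=f$: since $R$ is commutative, the constant term of $fgf$ is $a_{0}b_{0}a_{0}$, and matching with $a_{0}$ gives $a_{0}b_{0}a_{0}=a_{0}$, so $a_{0}$ is regular in $R$ with witness $b_{0}$.

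For the higher coefficients $a_{i}$ with $i\geq 1$, I would reduce modulo an arbitrary prime ideal $P\lhd R$. The image of the regularity equation in $(R/P)[x]$ reads $\bar f\bar g\bar f=\bar f$, that is $\bar f(\bar g\bar f-1)=0$. Since $R/P$ is an integral domain, so is $(R/P)[x]$, and the last equation forces either $\bar f=0$ or $\bar g\bar f=1$. In the second case $\bar f$ is a unit of $(R/P)[x]$, and the only units of a polynomial ring over a domain are the units of the coefficient ring, so $\bar f$ is a constant. Either way the image of $a_{i}$ in $R/P$ is zero for every $i\geq 1$, i.e.\ $a_{i}\in P$. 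Since this holds for every prime $P$, $a_{i}$ lies in $\bigcap_{P}P=\mathrm{nil}(R)$ and is therefore nilpotent.

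No step is really an obstacle; the whole argument hinges on the familiar observation that a regular element of $D[x]$ for an integral domain $D$ must be a constant, combined with the prime-intersection description of the nilradical (both of which use commutativity, consistent with the hypothesis). A tidier alternative would be to first reduce modulo $\mathrm{nil}(R)$ and prove the intermediate statement that in a reduced commutative ring $\bar R$ every regular element of $\bar R[x]$ already lies in $\bar R$, but the direct prime-by-prime passage is the most efficient route.
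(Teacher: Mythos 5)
Your proof is correct and follows essentially the same route as the paper: read off $a_{0}b_{0}a_{0}=a_{0}$ from the constant terms, then reduce modulo each prime $P$ and conclude that $\bar f$ is constant in the domain $(R/P)[x]$, so that $a_{i}\in\bigcap_{P}P=\mathrm{nil}(R)$ for $i\geq 1$. The only (immaterial) difference is the local step: the paper deduces $\deg\bar f=0$ by degree-counting in $\bar f\bar g\bar f=\bar f$, while you factor this as $\bar f(\bar g\bar f-1)=0$ and invoke the fact that units of $D[x]$ over a domain $D$ are constants.
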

\begin{proof}
Since $f$ is regular, thus there exists $g=\sum_{i=0}
^{m}b_{i}x^{i}\in R[x]$ such that $fgf=f$. So
$a_{0}b_{0}a_{0}=a_{0}$. Therefore $a_{0}$ is regular. Now to end
the proof, it is enough to show that for each prime ideal $P$ of
$R$; every $a_{i}\in P$. Since $P$ is prime, thus $\frac{R}{P}[x]$
is an integral domain. Define $\varphi:R[x]\rightarrow
\frac{R}{P}[x]$ by $\varphi(\sum_{i=0} ^{k}a_{i}x^{i})=\sum_{i=0}
^{k}(a_{i}+p)x^{i}$. Clearly $\varphi$ is an epimorphism. But
$\varphi(f)\varphi(g)\varphi(f)=\varphi(f)$, so
$deg(\varphi(f)\varphi(g)\varphi(f))=deg(\varphi(f))$. Thus
$deg(\varphi(f))+deg(\varphi(g))+deg(\varphi(f))=deg(\varphi(f))$.
Therefore $deg(\varphi(f))+deg(\varphi(g))=0$. So
$deg(\varphi(f))=0$. Thus $a_{1}+P= ... = a_{n}+P=P$, as required.
\end{proof}
\begin{theo}
If $R$ is a commutative ring, then $R[x]$ is not $r$-clean.
\end{theo}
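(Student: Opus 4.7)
The plan is to argue by contradiction. Assume $R \neq 0$ and that $R[x]$ is $r$-clean (the zero ring being trivially $r$-clean, the hypothesis $R\neq 0$ is implicit in the statement). Applying $r$-cleanness to the indeterminate gives a decomposition $x = r + e$ with $r \in Reg(R[x])$ and $e = \sum_{i=0}^{n} b_i x^i \in Id(R[x])$, so that
\[ r = x - e = -b_0 + (1 - b_1)x - b_2 x^2 - \cdots - b_n x^n. \]

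Next, I would apply Lemma \ref{R[x]} to $r$: every non-constant coefficient of $r$ must be nilpotent in $R$, so in particular $1 - b_1$ is nilpotent. To extract information about $b_1$ itself, I would compare coefficients in $e^2 = e$. The constant term yields $b_0^2 = b_0$, so $b_0$ is idempotent in $R$; the coefficient of $x$ yields $2 b_0 b_1 = b_1$, i.e.\ $(1 - 2 b_0)\,b_1 = 0$. The key observation is that $1 - 2 b_0$ is a unit whenever $b_0$ is idempotent, since
\[ (1 - 2 b_0)^2 = 1 - 4 b_0 + 4 b_0^2 = 1; \]
that is, $1 - 2 b_0$ is its own inverse. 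Hence $b_1 = 0$.

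Combining the two conclusions, $1 = 1 - b_1$ is nilpotent in $R$, which forces $R = 0$ and contradicts our standing assumption. The only step that requires any real insight is the self-inverse identity for $1 - 2 b_0$; once that is in place, Lemma \ref{R[x]} delivers the contradiction immediately, and no further structural analysis of the polynomial $e$ (e.g.\ showing all idempotents of $R[x]$ lie in $R$) is needed.
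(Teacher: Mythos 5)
Your proof is correct, and its skeleton is the same as the paper's: both arguments apply Lemma \ref{R[x]} to the regular element $r=x-e$ and derive a contradiction from the forced nilpotence of the coefficient of $x$. The difference lies in how the idempotent $e$ is handled. The paper simply asserts $Id(R)=Id(R[x])$ --- so that $e$ is a constant and the coefficient of $x$ in $x-e$ is literally $1$ --- a standard fact for commutative rings, but one it neither proves nor cites. You avoid invoking that fact wholesale and extract only the piece you need, namely $b_1=0$, by comparing the constant and linear coefficients of $e^2=e$ and observing that $1-2b_0$ is a self-inverse unit; this is precisely the first step of the usual inductive proof that idempotents of $R[x]$ are constant. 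Your version is therefore strictly more self-contained at essentially no extra cost, and you also dispose of the degenerate case $R=0$, which the paper's statement silently excludes. One small caveat worth noting: Lemma \ref{R[x]} as stated claims ``$a_i$ is nilpotent for each $i$,'' but its proof only establishes this for $i\geq 1$; since you apply it only to the coefficient of $x$, your use of it is safe.
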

\begin{proof}
We show that $x$ is not $r$-clean in $R[x]$. Suppose that $x=r+e$,
where $r\in Reg(R[x])$ and $e\in Id(R[x])$. Since $Id(R)=Id(R[x])$
and $x=r+e$, so $x-e$ is regular. Hence by pervious Lemma, $1$
should be nilpotent, which is a contradiction.
\end{proof}
\begin{rem}
Even if $R$ is a field, then $R[x]$ is not $r$-clean.
\end{rem}
\begin{cor}
If $R$ is a commutative ring, then $R[x]$ is neither clean nor
regular.
\end{cor}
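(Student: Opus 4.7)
The plan is to read the corollary as an immediate contrapositive of the preceding theorem, since cleanness and regularity are both special cases of $r$-cleanness. Concretely, the previous theorem establishes that for any commutative ring $R$, the polynomial ring $R[x]$ fails to be $r$-clean, so it suffices to observe that any clean ring and any regular ring must be $r$-clean.

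First I would recall (as noted in the introduction of the paper) that every unit is regular: if $u\in U(R)$ then $u = u\cdot u^{-1}\cdot u$, so $U(R)\subseteq Reg(R)$. Consequently, if $x=u+e$ with $u\in U(R)$ and $e\in Id(R)$, the same decomposition shows $x$ is $r$-clean. Hence every clean ring is $r$-clean. Similarly, if $R$ is von Neumann regular then each $x\in R$ equals $x+0$ with $x\in Reg(R)$ and $0\in Id(R)$, so every regular ring is trivially $r$-clean.

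With these two observations in hand, the proof of the corollary is a one-line contrapositive: the previous theorem asserts $R[x]$ is not $r$-clean, so $R[x]$ cannot be clean (for otherwise it would be $r$-clean) and cannot be regular (for the same reason). There is no real obstacle here; the only subtlety is recognising that the real content has already been done in the preceding theorem and its lemma, so the corollary is purely a formal consequence of the inclusions $\{\text{clean rings}\}\cup\{\text{regular rings}\}\subseteq\{r\text{-clean rings}\}$ applied contrapositively.
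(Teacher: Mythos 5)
Your proof is correct and is exactly the argument the paper intends: the corollary follows by contrapositive from the preceding theorem ($R[x]$ is not $r$-clean) together with the paper's opening observation that clean rings and regular rings are both $r$-clean. Nothing further is needed.
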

\begin{theo}
Let $R$ be a ring. Then the ring $R[[x]]$ is $r$-clean if and only
if so is $R$.
\end{theo}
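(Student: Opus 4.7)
The forward direction follows immediately from Theorem~\ref{factor}: the evaluation map $\varepsilon\colon R[[x]]\to R$, $f\mapsto f(0)$, is a surjective ring homomorphism with kernel $xR[[x]]$, so $R\cong R[[x]]/(x)$ is a factor ring of $R[[x]]$ and therefore $r$-clean whenever $R[[x]]$ is.

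For the reverse direction, suppose $R$ is $r$-clean and let $f=\sum_{i\ge 0}a_i x^i\in R[[x]]$. Applying the $r$-clean hypothesis to the constant term, write $a_0=r+e$ with $r\in Reg(R)$ and $e\in Id(R)$, and fix $s\in R$ with $rsr=r$. Viewed as a constant power series, $e$ is an idempotent in $R[[x]]$, so if we can show that $g:=f-e=r+\sum_{i\ge 1}a_i x^i$ is regular in $R[[x]]$, then $f=g+e$ is an $r$-clean decomposition of $f$.

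My plan for producing a pseudo-inverse $t=\sum_{i\ge 0} t_i x^i\in R[[x]]$ for $g$ is to proceed recursively on the coefficient index. Setting $t_0:=s$ makes the constant coefficient of $gtg-g$ equal to $rsr-r=0$. For $n\ge 1$, equating coefficients of $x^n$ in $gtg=g$ reduces to an equation $rt_n r=\omega_n$, where $\omega_n\in R$ is a polynomial expression in $a_1,\ldots,a_n$ and $t_0,\ldots,t_{n-1}$. Since $rsr=r$, the image of the map $y\mapsto ryr$ coincides with $rRr=pRq$ for the idempotents $p:=rs$ and $q:=sr$ (which satisfy $pr=r=rq$), so this equation admits the solution $t_n:=s\omega_n s$ precisely when $\omega_n\in pRq$, equivalently when $(1-p)\omega_n=0=\omega_n(1-q)$.

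The main obstacle is to verify, at each stage of the recursion, that $\omega_n$ lies in $pRq$. Carrying this out requires a careful analysis of the Peirce components of $R[[x]]$ relative to $p$ and $q$, using the identities $pr=r=rq$ to simplify the explicit formulas for the $\omega_n$'s. Should the literal idempotent coming from the constant term fail to close the recursion, the flexibility afforded by non-constant idempotents of $R[[x]]$ (which can exist in the noncommutative setting) allows one to refine the choice of $e$ until it does. Once $t\in R[[x]]$ satisfying $gtg=g$ has been produced, $g\in Reg(R[[x]])$ and $f=g+e$ exhibits $f$ as $r$-clean, so $R[[x]]$ is $r$-clean.
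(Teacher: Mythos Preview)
Your forward direction is correct and coincides with the paper's.

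For the converse, there is a genuine gap. Your plan is to take an $r$-clean decomposition $a_0=r+e$ of the constant term, set $g=f-e$, and build a pseudo-inverse $t$ for $g$ recursively. But the very first step can fail: with $R$ a field and $f=x$, the constant term $0$ has only the $r$-clean decomposition $0=0+0$, so $e=0$ and $g=x$; yet $x$ is not regular in $R[[x]]$, since $xhx\in x^{2}R[[x]]$ for every $h$. Thus the obstruction $\omega_n\in pRq$ that you correctly isolate is genuinely not always satisfiable, and your fallback (``refine the choice of $e$'', possibly to a non-constant idempotent) is not an argument: when $R$ is commutative every idempotent of $R[[x]]$ is a constant, so the extra flexibility you invoke is absent, and you give no procedure for selecting a different constant idempotent either. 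In the example above one must switch to $e=1$, obtaining $x=1+(x-1)$ with $x-1$ a unit; your scheme never discovers this.

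For comparison, the paper's argument for the converse is completely different and, in fact, itself defective: it asserts a ring isomorphism $R[[x]]\cong\prod_{i\ge 0}R$ and then invokes Theorem~\ref{product}. That bijection is only an isomorphism of additive groups (equivalently, of $R$-modules); the multiplication in $R[[x]]$ is the Cauchy product, not the componentwise product, so the two are not isomorphic as rings in general. Hence neither your proposal nor the paper supplies a valid proof of the ``if'' direction, and a correct argument---if the statement is even true in the stated generality---would require a genuinely new idea.
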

\begin{proof}
If $R[[x]]$ is $r$-clean, then by Theorem \ref{factor}, $R\cong
\frac{R[[x]]}{(x)}$ is $r$-clean.\\
Conversely, suppose that $R$ is $r$-clean. We know that
$R[[x]]\cong \{(a_{i}):a_{i}\in R$, for each $i\geq
0\}=\prod_{i\geq 0} R$. So the result is clear by Theorem
\ref{product}.
\end{proof}
\begin{theo} \label{mat}
For every ring $R$, we have the following statements:
\begin{enumerate}[\bf (1)]
\item If $e$ is an central idempotent element of $R$ and $eRe$ and
$(1-e)R(1-e)$ are both $r$-clean, then so is $R$.

\item Let $e_{1}, ...,  e_{n}$ be orthogonal central idempotents
with $e_{1}+ ...+e_{n}=1$. Then $e_{i}Re_{i}$ is $r$-clean for
each $i$, if and only if $R$ is $r$-clean.

\item If $R$ is $r$-clean, then so is the matrix ring $\mat {n}
{R}$ for any $n\geq 1$.

\item If $R$ is $r$-clean and $M$ is a free $R$-module of rank
$n$, then $End(M)$ is $r$-clean.

 \item If $A$ and $B$ are rings and $M=_{B}M_{A}$ is a bimodule, the formal triangular matrix ring
$T=\left( {\begin{array}{*{20}c}
   {A } & {0 }  \\
   {M } & {B }  \\
\end{array}} \right)$
is $r$-clean, then both $A$ and $B$ are $r$-clean.

\item Let for each integer $n\geq 2$, the ring $T$ of all $n\times
n$ lower( resp., upper) triangular matrices over $R$ be $r$-clean.
Then $R$ is $r$-clean.
\end{enumerate}
\end{theo}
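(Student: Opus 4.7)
The proof splits naturally: parts (1), (2), (4), (5), (6) are brief applications of Theorems \ref{factor} and \ref{product}, while (3) is the main technical ingredient.

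For (1), centrality of $e$ gives the Peirce decomposition as a ring isomorphism $R \cong eRe \times (1-e)R(1-e)$, and Theorem \ref{product} concludes. Part (2) extends this: from orthogonal central idempotents $e_{1}, \ldots, e_{n}$ with $\sum e_{i} = 1$ one has $R \cong \prod_{i=1}^{n} e_{i} R e_{i}$, so the ``if'' direction is Theorem \ref{product}, and for ``only if'' each $e_{i} R e_{i}$ is a homomorphic image of $R$ (via $r \mapsto e_{i} r$), so Theorem \ref{factor} applies.

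For (4), fixing a basis of $M$ gives $\mathrm{End}(M) \cong \mat{n}{R}$, and the conclusion follows from (3). For (5), the set of elements of $T$ whose $A$-component is zero is a two-sided ideal whose quotient is isomorphic to $A$, so Theorem \ref{factor} gives $A$ is $r$-clean; the symmetric ideal (those elements whose $B$-component is zero) handles $B$. For (6), the projection onto the $(1,1)$-entry of a lower triangular matrix (respectively the $(n,n)$-entry of an upper triangular matrix) is a surjective ring homomorphism $T \to R$, because the first row of a lower triangular matrix has its only possibly nonzero entry in column $1$; Theorem \ref{factor} then gives $R$ is $r$-clean.

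The heart of the theorem is (3), which I would prove by induction on $n$, the base case being tautological. For the inductive step, I would decompose $A \in \mat{n}{R}$ into block form with a scalar $(1,1)$-entry $a$ and an $(n-1)\times(n-1)$ lower-right block $D$. Writing $a = q + f$ with $q \in Reg(R)$, $f \in Id(R)$ and, by the inductive hypothesis, $D = Q + F$ with $Q$ regular and $F$ idempotent in $\mat{n-1}{R}$, the natural candidate is the block-diagonal idempotent $E$ whose diagonal blocks are $f$ and $F$. The hard part will be producing a quasi-inverse for $A - E$, since regularity of a block matrix does not follow automatically from regularity of its diagonal blocks. My plan to overcome this is first to conjugate $A$ by suitable elementary matrices in $\gl{n}{R}$, an operation that preserves $r$-cleanness because conjugation by a unit carries idempotents to idempotents and regular elements to regular elements; this should reduce $A$ to a form in which the off-diagonal blocks can be eliminated or absorbed into the diagonal, after which a blockwise quasi-inverse can be assembled from the quasi-inverses of $q$ in $R$ and of $Q$ in $\mat{n-1}{R}$.
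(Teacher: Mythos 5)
Your treatment of parts (1), (2), (4), (5) and (6) is correct and close in spirit to the paper's: the paper proves (1) by writing out the Peirce decomposition for a central idempotent and checking regularity and idempotency componentwise (essentially your appeal to Theorem~\ref{product}), obtains (2) from (1) by induction together with Theorem~\ref{factor}, and proves (5) by a direct block computation where you more cleanly pass to quotients by the two evident ideals; your projection-onto-a-corner-entry argument for (6) is, if anything, tidier than the paper's reduction of (6) to (5).

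The problem is part (3), which you yourself identify as the heart of the theorem and which you do not actually prove. The block-diagonal idempotent $E$ is a reasonable candidate, but the essential step --- exhibiting a quasi-inverse for $A-E$ --- is exactly what is left open, and the proposed remedy would fail: an arbitrary matrix over a ring is not conjugate to a block-diagonal one, so ``eliminating or absorbing the off-diagonal blocks by conjugation with elementary matrices'' is not an available move; it simply restates the difficulty. In the clean-ring analogue (Han--Nicholson) the inductive step succeeds precisely because the unit summand $u$ of the corner entry can be inverted: one forms the Schur complement $d-cu^{-1}b$ in the complementary corner and factors the resulting matrix as a product of elementary and block-diagonal units. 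With a merely regular summand $q$ there is no $q^{-1}$, the Schur complement cannot be formed, and you offer no substitute. For what it is worth, the paper's own proof of (3) is the single sentence that it ``follows from (2)'', which is also inadequate as written: the diagonal matrix units $e_{11},\dots,e_{nn}$ of $\mat{n}{R}$ are orthogonal idempotents with corner rings isomorphic to $R$, but they are not central for $n\ge 2$, whereas (2) and the proof of (1) explicitly require centrality. So your instinct that (3) needs a genuine argument is sound; the argument itself is still missing from both your proposal and the paper.
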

\begin{proof}
We use $\bar{e}$ to denote $1-e$ and
apply the Pierce decomposition for the ring $R$, i.e.,\\
 $$R=eRe\oplus eR\bar{e}\oplus\bar{e}Re\oplus\bar{e}R\bar{e}.$$
But idempotents in $R$ are central, so $R=eRe\oplus
\bar{e}R\bar{e}\cong \left({\begin{array}{*{20}c}
   {eRe } & {0 }  \\
   {0} & {\bar{e}R\bar{e} }  \\
\end{array}} \right)$. For each $A\in R$, write  $A=\left({\begin{array}{*{20}c}
   {a } & {0 }  \\
   {0} & {b }  \\
\end{array}} \right)$, where $a,b$ belong to $eRe$ and
$\bar{e}R\bar{e}$, respectively. By our hypothesis $a,b$ are
$r$-clean. Thus $a=r_{1}+e_{1}$, $b=r_{2}+e_{2}$, where $r_{1},
r_{2}\in Reg(R)$ and $e_{1}, e_{2}\in Id(R)$. So
$$A=\left({\begin{array}{*{20}c}
   {a } & {0 }  \\
   {0} & {b }  \\
\end{array}} \right)=\left({\begin{array}{*{20}c}
   {r_{1}+e_{1} } & {0 }  \\
   {0} & {r_{2}+e_{2} }  \\
\end{array}} \right)=\left({\begin{array}{*{20}c}
   {r_{1} } & {0 }  \\
   {0} & {r_{2} }  \\
\end{array}} \right)+\left({\begin{array}{*{20}c}
   {e_{1} } & {0 }  \\
   {0} & {e_{2} }  \\
\end{array}} \right).$$
But there exists $y_{1}, y_{2}\in R$ such that
$r_{1}y_{1}r_{1}=r_{1}$, $r_{2}y_{2}r_{2}=r_{2}$. Therefore
$$\left({\begin{array}{*{20}c}
   {r_{1} } & {0 }  \\
   {0} & {r_{2} }  \\
\end{array}} \right)\left({\begin{array}{*{20}c}
   {y_{1} } & {0 }  \\
   {0} & {y_{2} }  \\
\end{array}} \right)\left({\begin{array}{*{20}c}
   {r_{1} } & {0 }  \\
   {0} & {r_{2} }  \\
\end{array}} \right)= \left({\begin{array}{*{20}c}
   {r_{1}y_{1}r_{1} } & {0 }  \\
   {0} & {r_{2}y_{2}r_{2}}  \\
\end{array}} \right)= \left({\begin{array}{*{20}c}
   {r_{1} } & {0 }  \\
   {0} & {r_{2} }  \\
\end{array}} \right).$$
So $\left({\begin{array}{*{20}c}
   {r_{1} } & {0 }  \\
   {0} & {r_{2} }  \\
\end{array}} \right)\in Reg(R)$, since $\left({\begin{array}{*{20}c}
   {e_{1} } & {0 }  \\
   {0} & {e_{2} }  \\
\end{array}} \right)\in Id(R)$, it follows that $R$ is $r$-clean.\\
On direction of $(2)$ follows from $(1)$ by induction. Conversely,
let $R$ be a $r$-clean ring and $e_{1}, ...,  e_{n}$ be orthogonal
central idempotents with $e_{1}+ ...+e_{n}=1$. Then since $R=
e_{1}Re_{1} \oplus ... \oplus e_{n}Re_{n}$ then  by Theorem
\ref{factor}, $e_{i}Re_{i}$ is $r$-clean for each $i$.\\
Also $(3)$ follows from $(2)$, and $(4)$ follows from $(3)$ and
the fact that $End(R^{n})\cong \mat {n} {R}$.\\
 For the proof of $(5)$,
let $T=\left( {\begin{array}{*{20}c}
   {A } & {0 }  \\
   {M } & {B }  \\
\end{array}} \right)$
be $r$-clean. Then for every $a\in A$, $b\in B$ and $m\in M$,
write $\left( {\begin{array}{*{20}c}
   {a } & {0 }  \\
   {m } & {b }  \\
\end{array}} \right)=\left( {\begin{array}{*{20}c}
   {f_{1} } & {0 }  \\
   {f_{2} } & {f_{3} }  \\
\end{array}} \right)+ \left( {\begin{array}{*{20}c}
   {r_{1} } & {0 }  \\
   {r_{2} } & {r_{3} }  \\
\end{array}} \right)$, where $\left( {\begin{array}{*{20}c}
   {f_{1} } & {0 }  \\
   {f_{2} } & {f_{3} }  \\
\end{array}} \right)\in Id(T)$ and $\left( {\begin{array}{*{20}c}
   {r_{1} } & {0 }  \\
   {r_{2} } & {r_{3} }  \\
\end{array}} \right)\in Reg(T)$. So $a = f_{1}+ r_{1}$ and $b = f_{3}+ r_{3}$. But there exists $\left(
{\begin{array}{*{20}c}
   {y_{1} } & {0 }  \\
   {y_{2} } & {y_{3} }  \\
\end{array}} \right)$ such that $$\left( {\begin{array}{*{20}c}
   {r_{1} } & {0 }  \\
   {r_{2} } & {r_{3} }  \\
\end{array}} \right) \left( {\begin{array}{*{20}c}
   {y_{1} } & {0 }  \\
   {y_{2} } & {y_{3} }  \\
\end{array}} \right) \left( {\begin{array}{*{20}c}
   {r_{1} } & {0 }  \\
   {r_{2} } & {r_{3} }  \\
\end{array}} \right)= \left( {\begin{array}{*{20}c}
   {r_{1} } & {0 }  \\
   {r_{2} } & {r_{3} }  \\
\end{array}} \right).$$ So
 $$\left( {\begin{array}{*{20}c}
   {r_{1}y_{1}r_{1} } & {0 }  \\
   {r_{2}y_{1}r_{1}+ r_{3}y_{2}r_{1}+r_{3}y_{3}r_{2} } & {r_{3}y_{3}r_{3} }  \\
\end{array}} \right)= \left( {\begin{array}{*{20}c}
   {r_{1} } & {0 }  \\
   {r_{2} } & {r_{3} }  \\
\end{array}} \right) .$$
Hence $r_{1}\in Reg(A)$ and $r_{3}\in Reg(B)$. It is easy to check
that $f_{1}\in Id(A)$ and $f_{3}\in Id(B)$. Therefore $a$ and $b$
are $r$-clean. Hence both $A$ and $B$ are $r$-clean.\\
To end the proof, we can see $(6)$ follows from $(5)$.
\end{proof}
\begin{prop}\label{idempotent}
Let $R$ be a $r$-clean ring and $e$ be a central idempotent in
$R$. Then $eRe$ is also $r$-clean.
\end{prop}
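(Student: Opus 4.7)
The plan is to take an arbitrary $x \in eRe$ and exploit the $r$-clean decomposition of $x$ in the ambient ring $R$, then ``cut it down'' to $eRe$ by sandwiching with $e$. Centrality of $e$ is what makes this cutting compatible with both regularity and idempotence.

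More concretely, I would first observe that any $x \in eRe$ satisfies $x = exe$. Since $R$ is $r$-clean, I write $x = r + f$ with $r \in \mathrm{Reg}(R)$ and $f \in \mathrm{Id}(R)$, and pick $y \in R$ with $ryr = r$. Multiplying the equation $x = r+f$ on both sides by $e$, I get
\[
x \;=\; exe \;=\; ere + efe.
\]
So the task reduces to showing that $ere \in \mathrm{Reg}(eRe)$ and $efe \in \mathrm{Id}(eRe)$.

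For the idempotent piece, centrality of $e$ gives $(efe)(efe) = e f e e f e = e f^2 e = efe$, so $efe$ is an idempotent lying in $eRe$. For the regular piece, I propose the witness $eye \in eRe$ and compute, using that $e$ commutes with every element of $R$ and $e^2=e$:
\[
(ere)(eye)(ere) \;=\; e\,(ryr)\,e \;=\; ere.
\]
Hence $ere$ is regular as an element of the ring $eRe$ (note the witness $eye$ genuinely lies in $eRe$, so regularity holds internally, not merely in $R$). Combining the two pieces shows $x$ has an $r$-clean decomposition in $eRe$, which is what we wanted.

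There is essentially no obstacle here; the only subtlety is being careful that the regularity witness $y$ for $r$ in $R$ can be replaced by $eye$ inside $eRe$, which requires centrality of $e$ to push all the $e$'s past $r$ and $y$ and collapse them using $e^2=e$. Without centrality the identity $(ere)(eye)(ere) = e(ryr)e$ would fail, so this hypothesis is essential, in agreement with the way part~(1) of Theorem~\ref{mat} was used. No other results beyond elementary manipulation and the definition of $r$-clean are required.
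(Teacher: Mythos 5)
Your proof is correct and is in substance the same as the paper's: the operation of sandwiching with the central idempotent $e$ is exactly the surjective ring homomorphism $R \to eRe$, $a \mapsto eae$, and the paper simply invokes Theorem~\ref{factor} (homomorphic images of $r$-clean rings are $r$-clean) instead of redoing the verification that $efe$ is idempotent and $ere$ is regular with witness $eye$. Your version just unpacks that citation into an explicit computation, which is fine.
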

\begin{proof}
Since $e$ is central, it follows that $eRe$ is homomorphic image
of $R$. Hence the result follows from Theorem \ref{factor}.
\end{proof}
\begin{theo}
Let $R$ be a ring in which $2$ is invertible. Then $R$ is
$r$-clean if and only if every element of $R$ is the sum of a
regular and a square root of $1$.
\end{theo}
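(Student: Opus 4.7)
The plan is to adapt the argument of Camillo and Yu (\cite[Proposition 10]{Cam}) for clean rings to the $r$-clean setting. The key observation is the familiar bijection between idempotents and square roots of $1$: if $e\in Id(R)$ then $v := 2e-1$ satisfies $v^{2}=1$, and conversely if $v^{2}=1$ then $e := (v+1)/2$ is idempotent. Because $2=1+1$ is central and invertible by hypothesis, $2^{-1}$ is also central, so both transformations make sense inside $R$ and pass back and forth between decompositions of the two desired shapes.

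For the $(\Rightarrow)$ direction, I would take $x\in R$ and apply the $r$-clean hypothesis to the element $(x+1)/2$, writing it as $r+e$ with $r\in Reg(R)$ and $e\in Id(R)$. Clearing the $2$ yields $x = 2r + (2e-1)$, and a direct expansion shows $(2e-1)^{2} = 4e^{2}-4e+1 = 1$, so $2e-1$ is a square root of $1$. For the $(\Leftarrow)$ direction, given $x\in R$ I would apply the hypothesis to $2x-1$ instead, writing $2x-1 = s+v$ with $s\in Reg(R)$ and $v^{2}=1$; then $x = (s/2) + (v+1)/2$, and $(v+1)/2$ is idempotent by the one-line check $((v+1)/2)^{2} = (v^{2}+2v+1)/4 = (v+1)/2$.

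The only substantive technical step, occurring symmetrically in both directions, is showing that scaling a regular element by a central unit preserves regularity: if $r = ryr$ and $c$ is a central unit of $R$, then $(cr)(c^{-1}y)(cr) = cr$, so $cr\in Reg(R)$. Taking $c=2$ in the forward direction (to see that $2r\in Reg(R)$) and $c = 2^{-1}$ in the backward direction (to see that $s/2\in Reg(R)$) completes each decomposition. I do not foresee any genuine obstacle; the argument is a routine transcription of the clean-ring version, with ``unit'' replaced throughout by ``regular element,'' and the centrality of $2$ is exactly what makes the rescalings go through cleanly.
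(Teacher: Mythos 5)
Your proposal is correct and follows essentially the same route as the paper: both directions apply the hypothesis to $\frac{x+1}{2}$ (resp. $2x-1$) and use the correspondence $e\leftrightarrow 2e-1$ between idempotents and square roots of $1$, with the regularity of the rescaled element verified by exhibiting an explicit quasi-inverse. Your packaging of the rescaling step as a general lemma (a central unit times a regular element is regular, via $(cr)(c^{-1}y)(cr)=cr$) is a tidier way of presenting the two computations the paper carries out separately, but it is not a different argument.
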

\begin{proof}
Suppose that $R$ is $r$-clean and $x\in R$, then $\frac{x+1}{2}\in
R$. Write $\frac{x+1}{2}=r+e$, where $r\in Reg(R)$ and $e\in
Id(R)$. So $x=(2e-1)+2r$. But there exists $y\in R$ such that
$ryr=r$. Thus $(r+r)\frac{y}{2}(r+r)=\frac{ryr}{2}+
\frac{ryr}{2}+\frac{ryr}{2}+\frac{ryr}{2}=\frac{1}{2}(r+r+r+r)=2r$.
Thus $2r\in Reg(R)$ and since $(2e-1)^{2}=1$, so $x$ is a sum of
a regular and a square root of $1$.\\
Conversely, if $x\in R$, then $2x-1=t+r$, where $t^{2}=1$ and
$r\in Reg(R)$. Thus $x=\frac{t+1}{2}+ \frac{r}{2}$. It is easy to
check that $\frac{t+1}{2}\in Id(R)$. Now  since
$\frac{r}{2}(y+y)\frac{r}{2}=\frac{ryr}{4}+
\frac{ryr}{4}=\frac{r}{2}$, it follows that $\frac{r}{2}\in
Reg(R)$. Hence $x$ is $r$-clean, which shows that $R$ is
$r$-clean.
\end{proof}
If $G$ is a group and $R$ is a ring we denote the group ring over
$R$ by $RG$. If $RG$ be $r$-clean, then $R$ is $r$-clean by
Theorem \ref{factor}. But it seems to be difficult to characterize
$R$ and $G$ for which $RG$ is $r$-clean in general. In
\cite{Chen}, \cite{Han1} and \cite{WWm} have given some rings and
groups that $RG$ is clean (so is $r$-clean). In following we will
give some rings and groups that $RG$ is $r$-clean.
\begin{theo}
Let $R$ be a commutative semiperfect ring, $G$ be a group and
$(eRe)G$ be $r$-clean for each local idempotent $e$ in $R$. Then
$RG$ is $r$-clean.
\end{theo}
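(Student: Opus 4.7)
The plan is to reduce the statement to the two main tools the authors have already set up: the Pierce/orthogonal-idempotent decomposition in Theorem \ref{mat}(2), and the structure of a commutative semiperfect ring as a finite product of local rings. The hypothesis is custom-tailored to this reduction, so the proof should be short once the pieces are lined up.

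First, I would invoke the standard fact that in a semiperfect ring the identity admits a decomposition $1=e_{1}+\cdots+e_{n}$ into pairwise orthogonal primitive (equivalently, local) idempotents, with each corner $e_{i}Re_{i}$ a local ring. Because $R$ is commutative every idempotent is central, so the $e_{i}$ are orthogonal central idempotents summing to $1$, and each $e_{i}$ is a local idempotent in $R$. By hypothesis, therefore, $(e_{i}Re_{i})G$ is $r$-clean for every $i$.

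Next I would transport these idempotents to the group ring $RG$. Since each $e_{i}$ is central in $R$ and the scalars commute with group elements in $RG$, each $e_{i}$ is central in $RG$; orthogonality and the relation $e_{1}+\cdots+e_{n}=1$ are preserved. A direct check (using that $e_{i}$ commutes with every $r\in R$ and every $g\in G$) gives the ring isomorphism
\[
e_{i}(RG)e_{i}\;\cong\;(e_{i}Re_{i})G,
\]
so each corner of $RG$ at $e_{i}$ is $r$-clean by the hypothesis. Theorem \ref{mat}(2) then yields that $RG$ itself is $r$-clean, completing the proof.

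The only step that requires any care at all is the identification $e_{i}(RG)e_{i}\cong (e_{i}Re_{i})G$ and the verification that the $e_{i}$ remain central in $RG$; both follow immediately from the commutativity of scalars in a group ring together with $e_{i}\in Z(R)$. Everything else is a direct appeal to results already proved in the paper (Theorem \ref{mat}(2), and implicitly Theorem \ref{product} via the equivalent product decomposition $R\cong\prod_{i}e_{iRe_{i}}$ if one prefers that route), so there is no genuine obstacle.
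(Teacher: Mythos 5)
Your proposal is correct and follows essentially the same route as the paper: decompose $1=e_{1}+\cdots+e_{n}$ into orthogonal local idempotents via semiperfectness, identify $e_{i}(RG)e_{i}\cong(e_{i}Re_{i})G$, and conclude with Theorem \ref{mat}(2). Your explicit remark that commutativity of $R$ makes the $e_{i}$ central in $RG$ (which Theorem \ref{mat}(2) genuinely requires) is a point the paper leaves implicit, but it is the same argument.
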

\begin{proof}
Since $R$ is semiperfect, so by \cite[Theorem 27.6]{Anderson}, $R$
has a complete orthogonal set $e_{1}, ..., e_{n}$ of idempotents
with each $e_{i}Re_{i}$ a local ring for each $i$. So $e_{i}$ is a
local idempotent for each $i$. Now by hypothesis, $(e_{i}Re_{i})G$
is $r$-clean. Since $(e_{i}Re_{i})G\cong e_{i}(RG)e_{i}$ for each
$i$, it follows that $e_{i}(RG)e_{i}$ is $r$-clean. Hence $RG$ is
$r$-clean by Theorem \ref{mat} (2).
\end{proof}
\begin{theo}
Let $R$ be a ring which $2$ is invertible and $G=\{$1,g$\}$ be a
group. Then $RG$ is $r$-clean if and only if $R$ is $r$-clean.
\end{theo}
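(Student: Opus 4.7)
The plan is to reduce the statement to a direct product decomposition of $RG$, after which Theorems \ref{factor} and \ref{product} finish the job.

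For the forward direction, observe that $R \cong RG/I$ where $I$ is the kernel of the augmentation $\varepsilon : RG \to R$, $\varepsilon(a+bg) = a+b$. Since $\varepsilon$ is a surjective ring homomorphism, if $RG$ is $r$-clean then so is $R$ by Theorem \ref{factor}.

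For the converse, the key observation is that, because $g^2 = 1$ and $2 \in U(R)$, the elements $e = \frac{1+g}{2}$ and $1-e = \frac{1-g}{2}$ form a pair of orthogonal central idempotents in $RG$ with $e + (1-e) = 1$. Equivalently, I would define $\varphi : RG \to R \times R$ by
$$\varphi(a + bg) = (a+b,\, a-b)$$
and check directly that this is a unital ring homomorphism (the essential computation is $\varphi((a+bg)(c+dg)) = ((a+b)(c+d),\,(a-b)(c-d))$, which follows from $g^2 = 1$). Injectivity is immediate from the invertibility of $2$, and surjectivity is given explicitly by $(x,y) \mapsto \tfrac{x+y}{2} + \tfrac{x-y}{2}\,g$. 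Hence $RG \cong R \times R$.

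Assuming $R$ is $r$-clean, Theorem \ref{product} gives that $R \times R$ is $r$-clean, and therefore so is $RG$ via the above isomorphism. The main point where care is needed is simply verifying that $\varphi$ respects multiplication, but this is a one-line check once $g^2 = 1$ is used; the substantive content of the theorem has already been packaged in Theorems \ref{factor} and \ref{product}, and the hypothesis on $2$ exists precisely to make the idempotent decomposition available.
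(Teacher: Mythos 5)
Your proof is correct and follows the same route as the paper: the paper also reduces the converse to the isomorphism $RG\cong R\times R$ (citing Han--Nicholson rather than writing out the map $a+bg\mapsto(a+b,a-b)$ as you do) and then invokes Theorem \ref{product}, while the forward direction is handled by Theorem \ref{factor} applied to the augmentation. Your explicit verification of the isomorphism is a welcome but inessential elaboration of the paper's citation.
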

\begin{proof}
One direction is trivial.\\
Conversely, if $R$ is $r$-clean, then since $2$ is invertible by
\cite[Proposition 3]{Han1}, $RG\cong R\times R$. Hence $RG$ is
$r$-clean by Theorem \ref{product}.
\end{proof}

\end{document}